\documentclass[a4paper,12pt,leqno]{amsart}
\usepackage{geometry}
\usepackage[utf8]{inputenc}
\usepackage{amssymb,amsfonts,amsmath,amsthm,eucal,url,cite}
\usepackage{enumitem}
\usepackage{tikz-cd}
\usepackage{hyperref}
\usepackage{multicol}
\usepackage{slashed}


\newcommand{\dd}{\mathrm{d}}

\newcommand{\RR}{\mathbb{R}}

\def\Span{\mathrm{Span}}

\newtheorem{theorem}{Theorem}[section]

\newtheorem{lemma}[theorem]{Lemma}
\newtheorem{definition}[theorem]{Definition}

\theoremstyle{definition}

\theoremstyle{remark}
\newtheorem{rem}[theorem]{\bf Remark}

\parskip 3pt

\title{Conformal vector fields on LCP manifolds}
\author{Brice Flamencourt, Andrei Moroianu}

\address{Brice Flamencourt, UMPA - ENS Lyon, CNRS, 46 allée d’Italie, 69364 Lyon, France.}
\email{brice.flamencourt@ens-lyon.fr}

\address{Andrei Moroianu \\ Université Paris-Saclay, CNRS,  Laboratoire de mathématiques d'Orsay, 91405, Orsay, France, 
and Institute of Mathematics “Simion Stoilow” of the Romanian Academy, 21 Calea Grivitei, 010702 Bucharest, Romania}
\email{andrei.moroianu@math.cnrs.fr}

\subjclass[2020]{53C05, 53C18, 53C29}
\keywords{Conformal geometry, Weyl connections, LCP manifolds, Gauduchon metric}

\begin{document}
\begin{abstract}
    We show that conformal vector fields on compact locally conformally product manifolds are orthogonal to the flat distribution and Killing with respect to the Gauduchon metric.
\end{abstract}

\maketitle

\begin{center}
\end{center}
\section{Introduction}

Locally conformally product (LCP) structures arise naturally in the theory of Weyl connections. They consist in a closed non-exact Weyl connection $D$ on a compact conformal manifold $(M,c)$ with (non-zero) reducible holonomy. LCP structures are closely related to LCK structures, which can be defined in a similar way, just by replacing the reducibility condition for the holonomy of $D$ with the existence of a $D$-parallel complex structure.

The study of LCP structures started with the Belgun-Moroianu conjecture \cite{BM}, which stated that they should not exist. However, the works of Matveev-Nikolayevsky \cite{MN15,MN17} and Kourganoff \cite{Kou} after them exhibited examples of such manifolds, and also showed that their structure was very special. Indeed, when lifted to the universal cover $\tilde M$ of $M$, $D$ becomes the Levi-Civita connection of a Riemannian metric $h$ and $(\tilde M, h)$ is isometric to $(\RR^q,h_0) \times (N, h_N)$, where $(N,h_N)$ is an irreducible Riemannian manifold. 

After the discovery of this result, LCP manifolds have been analyzed from various points of view by several authors. LCP structures on solvmanifolds \cite{ABM} and more generally on compact quotients of Lie groups \cite{dBM} have been studied by Andrada, del Barco and the second author, and their description under some additional assumptions concerning their characteristic group has been given by the first author in \cite{F24}. Obstructions to the existence of LCP structures on conformal manifolds are also discussed in \cite{BFM}, where it is shown that the conformal class of an LCP manifold cannot contain Einstein of Kähler metrics for example.

An important tool in conformal geometry is the Gauduchon metric \cite{G}, which on a compact conformal manifold $(M,c)$ endowed with a Weyl connection $D$ is the unique (up to a multiplicative constant)  Riemannian metric $g\in c$ with the property that the Lee form of $D$ with respect to $g$ is coclosed. 

Recently, the second author together with Pilca Pilca \cite{MP24} studied the behavior of the Gauduchon metric of Weyl connections defining an LCP structure, and proved that it is adapted. Recall that if $(M,c,D)$ is an LCP manifold, a Riemannian metric $g$ in the conformal class $c$ is called adapted if the lift to the universal cover of the Lee form of $D$ with respect to $g$ vanishes on the flat distribution $T \RR^q$. 

In another recent work \cite{MP23}, the same authors generalized to the LCK setting the well-known fact that all conformal vector fields on a compact Kähler manifold are Killing, by proving that every conformal vector field on a compact LCK manifold is Killing with respect to the Gauduchon metric. 

It is thus natural to investigate the corresponding question in the LCP setting, all the more that a classical result of Tashiro and Miyashita \cite{TM67} states that on complete non-flat Riemannian products, all complete conformal vector fields are Killing. 

This is precisely what we achieve in this paper, by proving the following:

\begin{theorem} \label{main}
Let $(M,c,D)$ be an LCP manifold. Then the conformal vector fields of $(M,c)$ are exactly the Killing vector fields of the Gauduchon metric of the Weyl structure $D$.
\end{theorem}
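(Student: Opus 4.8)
The plan is to pass to the universal cover, invoke the rigidity of Riemannian products, and then transfer the conclusion back to the Gauduchon metric using its adapted character. The trivial inclusion is immediate: every Killing field of the Gauduchon metric $g$ is an infinitesimal isometry, hence preserves the conformal class and is conformal. For the converse, let $\xi$ be a conformal vector field of $(M,c)$, so that $\mathcal{L}_\xi g = 2fg$ for some $f\in C^\infty(M)$; the goal is to show $f\equiv 0$. Since $M$ is compact, $\xi$ is complete, and I lift it to a complete conformal vector field $\tilde\xi$ on the universal cover. On $\tilde M\cong\RR^q\times N$ the Gauduchon metric lifts to $\tilde g = e^{2\varphi}h$, where $h=h_0\oplus h_N$ is the product metric with $\nabla^h=\tilde D$ and $\tilde\theta=-d\varphi$ is the exact lift of the Lee form; as $\tilde g$ and $h$ are conformal, $\tilde\xi$ is also a conformal vector field of the Riemannian product $(\tilde M,h)$.

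The decisive input is the theorem of Tashiro and Miyashita: since $(N,h_N)$ is irreducible and non-flat, $(\tilde M,h)$ is a complete non-flat Riemannian product, so the complete conformal field $\tilde\xi$ must be Killing for $h$, that is $\mathcal{L}_{\tilde\xi}h=0$. Because the isometry algebra of such a product splits, I then decompose $\tilde\xi=X+Y$, with $X$ a Killing field of $(\RR^q,h_0)$ depending only on the flat factor and $Y$ a Killing field of $(N,h_N)$ depending only on $N$.

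Now I compare with the Gauduchon metric. By the adaptedness result of Moroianu and Pilca, $\tilde\theta$ vanishes on $T\RR^q$, so $\varphi$ depends only on the $N$-variable; since $X$ is tangent to the flat factor, $X\varphi=0$, and a direct computation gives $\mathcal{L}_{\tilde\xi}\tilde g = 2(\tilde\xi\varphi)\tilde g = 2(Y\varphi)\tilde g$. Thus the lift of the conformal factor is $\tilde f = Y\varphi = d\varphi(\tilde\xi) = -\tilde\theta(\tilde\xi)$, which descends to $f=-\theta(\xi)$ on $M$; in particular $\xi$ is Killing for $g$ exactly when $Y\varphi\equiv 0$, and it is orthogonal to the flat distribution exactly when $X=0$.

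The main obstacle is this last vanishing, and it is precisely here that compactness of $M$ must be combined with the Gauduchon condition. Coclosedness $\delta^g\theta=0$ lifts to $\delta^{\tilde g}\tilde\theta=0$, i.e.\ $\varphi$ is $\tilde g$-harmonic, which on $N$ reads $\Delta^{h_N}\varphi+(m-2)|\nabla^{h_N}\varphi|^2=0$ with $m=\dim M$. Differentiating this identity along the Killing field $Y$, which commutes with $\nabla^{h_N}$ and $\Delta^{h_N}$, yields an equation $\Delta f + 2(m-2)\langle\nabla\varphi,\nabla f\rangle=0$ for $f=Y\varphi$: a second-order elliptic equation of pure gradient-drift type, with no zeroth-order term. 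Since $f$ descends to the compact manifold $M$, the maximum principle forces $f$ to be constant, and as $\int_M f\,dV_g=\tfrac{1}{m}\int_M\mathrm{div}_g\xi\,dV_g=0$ I conclude $f\equiv 0$, so $\xi$ is Killing for $g$. The orthogonality $X=0$ then follows from $\Gamma$-equivariance of $\tilde\xi$: the deck group acts on the flat factor by genuine similarities, and a nonzero $\Gamma$-invariant Killing field of $\RR^q$ is incompatible with a nontrivial homothety factor. The delicate feature throughout is that the natural variables $\varphi$ and $Y$ live on the necessarily non-compact factor $N$, so the elliptic argument can only be closed after descending to the compact quotient $M$, where the coclosedness of the Lee form enters decisively.
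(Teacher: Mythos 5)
There is a genuine gap, and it sits at the very heart of your argument: the appeal to the Tashiro--Miyashita theorem is not valid here. That theorem concerns \emph{complete} non-flat Riemannian products, whereas by Kourganoff's structure theorem the non-flat factor $(N,h_N)$ of an LCP manifold is \emph{incomplete} --- this is not an accident but a necessity, since the deck group acts on $(\tilde M,h)$ by non-isometric homotheties, which is incompatible with $h$ being a complete non-flat metric. So the hypothesis ``$(\tilde M,h)$ is a complete non-flat Riemannian product'' in your second paragraph is false, and with it falls the conclusion that $\tilde\xi$ is Killing for $h$ and splits as $X+Y$ with $X$, $Y$ Killing on the respective factors. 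Everything downstream (the identification $\tilde f=Y\varphi$, the drift equation for $f$, the $\Gamma$-equivariance argument for $X=0$) is built on this step. The paper explicitly flags this obstruction and replaces it by a hands-on analysis: writing $\xi=\xi_1+\xi_2$ along the product, one shows that $\nabla_Z\xi_1(\cdot,y)$ is a \emph{gradient} conformal field on $(\RR^q,h_0)$ for each $Z\in T_yN$, uses the explicit classification of such fields to get a polynomial expression for $\xi_1$ and $\xi_2$ in the flat coordinates, and then exploits the bound $\Vert\xi\Vert_{\tilde g}\le C$ coming from compactness of $M$ together with the unboundedness of the conformal factor $\alpha$ to kill all the polynomial terms, concluding that $\xi_1=0$ and $\xi_2$ is a Killing field of $N$. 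Only after this does the Gauduchon metric enter, via the affinity of $\bar\xi$ with respect to $D$ and the uniqueness of the Gauduchon gauge.

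Two further points are worth noting. First, even your fallback elliptic argument is delicate: the identity $\Delta^{h_N}\varphi+(m-2)\vert\nabla^{h_N}\varphi\vert^2=0$ and the derived drift equation for $f=Y\varphi$ live on the non-compact, incomplete factor $N$ and involve the metric $h$, which does not descend to $M$; to run the maximum principle on $M$ you would have to rewrite everything in terms of $g$ and check that the resulting operator is still of pure drift type. Second, your intended conclusion is actually weaker than what is true: the paper shows that the lift of a conformal field is not merely Killing for $h$ but is a Killing field of $(N,h_N)$ alone (the flat component vanishes identically), and this stronger statement is what makes the final descent to the Gauduchon metric work cleanly.
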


The strategy of the proof is roughly as follows. Every conformal vector field $\bar\xi$ on $(M,c)$ can be lifted to a (complete) conformal vector field $\xi$ of the Riemannian product structure $(\RR^q,h_0) \times (N, h_N)$ on the universal cover of $M$. However, the Tashiro-Miyashita result does not apply, since this product metric is not complete. Nonetheless, we can show that the covariant derivative of $\xi$ with respect to tangent vectors to $N$, restricted to each flat leaf $\RR^q\times \{y\}$, is a gradient conformal vector field. Then using the explicit description of these vector fields and the fact the $\xi$ has bounded geometry, we obtain that $\xi$ has to be the pull-back of a Killing vector field on $(N,h_N)$. This implies in particular that $\bar \xi$ is affine with respect to the Weyl connection $D$, and we conclude by the uniqueness property of the Gauduchon metric.

{\bf Acknowledgments.} This work was partly supported by the PNRR-III-C9-2023-I8 grant CF 149/31.07.2023 {\em Conformal Aspects of Geometry and Dynamics} and by the Procope Project No. 57650868 (Germany) / 48959TL (France).

\section{Preliminaries on LCP structures} \label{preliminaries}

We start by introducing the basic concepts of conformal geometry.

\begin{definition}
A conformal structure on a smooth manifold $M$ is an equivalence class of the equivalence relation defined on the space of Riemannian metrics on $M$ by $g\sim g'$ if and only if there exists a smooth function $\varphi$ such that $g'=e^{2\varphi}g$.
 \end{definition}

Conformal structures are usually denoted by $c$. There is no natural connection on a conformal manifold as for the Riemannian case. However, one can consider a class of connections, called Weyl connections, which extends in a certain way the concept of Levi-Civita connection to the conformal case.

\begin{definition}
Let $(M, c)$ be a conformal manifold. A Weyl connection on $(M,c)$ is a torsion-free connection $D$ on $TM$ which preserves the conformal structure, in the sense that for any metric $g \in c$, there is a $1$-form $\theta_g$, called the {\em Lee form} of $D$ with respect to $g$, such that $D g = - 2 \theta_g \otimes g$.
 \end{definition}

Clearly if $g'=e^{2\varphi}g$ then $\theta_{g'}=\theta_g-d\varphi$, so the cohomological nature of the Lee form is independent of the metric. 
This motivates the following definition:

\begin{definition}
A Weyl connection $D$ on a conformal manifold $(M,c)$ is called {\em closed} if the Lee form of $D$ with respect to one metric - and then to all metrics - in $c$ is closed. Similarly, $D$ is called {\em exact} if the Lee form of $D$ with respect to one metric - and then to all metrics - in $c$ is exact.
\end{definition}

Note that every closed Weyl connection on a simply connected conformal manifold is exact.

Let now $D$ be a closed Weyl connection on a conformal manifold $(M,c)$. Its pull-back $\tilde D$ to the universal covering $\tilde M$ of $M$ is a Weyl connection for the conformal structure $\tilde c$ obtained by pulling-back $c$. This Weyl connection is exact since $\tilde M$ is simply connected, thus there exists a metric $h \in \tilde c$, unique up to a multiplication by a constant, such that $\nabla^h = \tilde D$, where $\nabla^h$ is the Levi-Civita connection of $h$. 

More precisely, if $g$ is any Riemannian metric on $M$ in the conformal class $c$, and $\theta_g$ is the Lee form of $D$ with respect to $g$, then its pull-back $\tilde\theta_g$ to the universal cover $\tilde M$ is exact, so there exists a function $\varphi\in C^\infty(\tilde M)$ such that $d\varphi=\tilde\theta_g$. Moreover, $\tilde\theta_g$ is also the Lee form of the pull-back Weyl connection $\tilde D$ on $\tilde M$, so by the above formula, the Lee form of $\tilde D$ with respect to the metric $h:=e^{2\varphi}\tilde g$ vanishes, i.e. $\tilde D$ is the Levi-Civita connection of $h$.
Note that the fundamental group $\pi_1(M)$ acts on $\tilde M$ by $h$-homotheties, and this action preserves $h$ if and only if $D$ is exact. 

LCP structures arise when one considers closed, non-exact Weyl connections on a compact conformal manifold. In this situation, one has a remarkable result proved by Kourganoff \cite{Kou}:

\begin{theorem} \label{fundLCP} {\rm \cite[Thm. 1.5]{Kou}}
Let $(M,c)$ be a conformal manifold endowed with a closed, non-exact Weyl connection $D$. Let $h$ be a metric on $\tilde M$, the universal cover of $M$, such that $\nabla^h = \tilde D$ where $\tilde D$ is the pull-back of $D$ to $\tilde M$. Then, one of the three following cases occurs:
\begin{itemize}
\item $(\tilde M, h)$ is flat;
\item $(\tilde M, h)$ is irreducible;
\item $(\tilde M, h)$ is a Riemannian product $(\RR^q,h_0) \times (N, h_N)$ where $q \ge 1$, $(\RR^q, h_0)$ is the usual Euclidean space and $(N, h_N)$ is a non-flat, incomplete Riemannian manifold.
\end{itemize}
\end{theorem}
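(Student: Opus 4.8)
The plan is to pass to the universal cover and read off the trichotomy from the holonomy of $h$, using the deck group as a group of homotheties. By hypothesis $\tilde D=\nabla^h$, and every deck transformation $\gamma\in\Gamma:=\pi_1(M)$ is a conformal map preserving $\tilde D$, hence an affine conformal map, i.e. a homothety of $h$: $\gamma^*h=\lambda_\gamma^2\,h$ for some $\lambda_\gamma>0$. As noted above, $D$ being non-exact means that the ratio homomorphism $\gamma\mapsto\lambda_\gamma$ is non-trivial, so some $\gamma_0\in\Gamma$ is a \emph{strict} homothety ($\lambda_{\gamma_0}\neq1$); and since $M=\tilde M/\Gamma$ is compact, the action of $\Gamma$ is free, properly discontinuous and cocompact. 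The statement is now a structural result about a simply connected Riemannian manifold carrying such a homothety group, and the three cases correspond to the holonomy representation of $h$ being trivial (first case), irreducible and non-trivial (second case), or reducible and non-trivial (third case).

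In the reducible case I would invoke the de Rham decomposition. The holonomy representation on $T_x\tilde M$ splits orthogonally into its maximal flat subspace and a sum of non-trivial irreducible pieces; parallel transport turns these into $h$-parallel, mutually orthogonal, integrable distributions with totally geodesic leaves. A homothety preserves $\nabla^h$ and rescales the curvature tensor by a constant, so it preserves the flat distribution and permutes the non-flat irreducible ones; replacing $\gamma_0$ by a suitable power, I may assume it preserves each factor and acts on it again as a strict homothety. The analytic engine is the classical fact that a \emph{complete} Riemannian manifold admitting a strict homothety is flat: a contracting homothety has a fixed point (the orbit $\gamma^n x$ is Cauchy by completeness), its differential there is a scalar multiple of an orthogonal map, and iterating contracts the manifold onto that point while rescaling the sectional curvature by a fixed factor at each step, which forces the curvature to vanish. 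Applied factor by factor, this shows that every non-flat irreducible factor must be \emph{incomplete} (otherwise it would be flat, a contradiction); grouping these factors yields the non-flat incomplete factor $N$, while the flat factor, being simply connected and complete, is a Euclidean space $(\RR^q,h_0)$, giving the splitting $(\tilde M,h)\cong(\RR^q,h_0)\times(N,h_N)$.

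The main obstacle is that $\tilde M$ need not be complete, so the global de Rham theorem does not apply directly: the parallel distributions integrate only to a local product, and turning this into a genuine global Riemannian product requires extra work. Here I expect to use simple-connectedness together with the homothety group to show that the product map from the leaves through a basepoint is a global diffeomorphism, and, crucially, to locate the incompleteness precisely in the non-flat factor. The two delicate points are proving that the flat factor is complete (equivalently, that all the incompleteness is carried by $N$) and that it is non-trivial, i.e. $q\geq1$, in the reducible case; these rely on combining the scaling behaviour of $\gamma_0$ with the cocompactness of $\Gamma$, and constitute the heart of Kourganoff's argument.
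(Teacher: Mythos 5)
The first thing to say is that the paper contains no proof of this statement: it is imported verbatim from Kourganoff as \cite[Thm.~1.5]{Kou} and used as a black box, so there is no internal argument to compare yours against. Judged on its own, your outline assembles the correct framework: the deck group acts on $(\tilde M,h)$ by homotheties because it preserves both $\tilde c$ and $\tilde D=\nabla^h$; non-exactness of $D$ yields a strict homothety $\gamma_0$; the trichotomy is read off the holonomy representation of $h$; and the classical lemma that a complete Riemannian manifold admitting a non-isometric homothety must be flat (fixed point by completeness, then curvature rescaling under iteration) is correctly stated and correctly deployed to show that no complete non-flat irreducible factor can occur.

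However, there is a genuine gap, and it sits exactly where you place it. Since $(\tilde M,h)$ is not complete in the relevant cases, the global de Rham theorem is unavailable, and the three assertions that actually constitute the theorem --- that the local product structure given by the parallel distributions globalizes to a genuine Riemannian product $(\RR^q,h_0)\times(N,h_N)$, that the flat factor is complete (i.e.\ all the incompleteness is carried by $N$), and that $q\ge 1$ whenever the holonomy is reducible and non-trivial --- are announced as ``delicate points \dots the heart of Kourganoff's argument'' rather than proved. None of them follows from the ingredients you list: for instance, nothing in your sketch excludes a reducible holonomy representation splitting into two non-trivial irreducible summands with no flat factor at all, which would contradict $q\ge1$; ruling this out, and locating the incompleteness entirely in $N$, requires the interplay between the cocompact similarity action of $\pi_1(M)$ and the dynamics of the strict homothety that occupies most of \cite{Kou}. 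As written, the proposal is an accurate road map to the reference, not a proof of the statement.
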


The third case in Theorem~\ref{fundLCP} corresponds to LCP structures. More precisely, we have the following:

\begin{definition}
An LCP manifold is a triple $(M,c,D)$ where $M$ is a compact manifold, $c$ is a conformal structure on $M$ and $D$ is a closed, non-exact Weyl connection, which is non-flat and reducible (i.e. the representation of its restricted holonomy group $\mathrm{Hol}_0(D)$ is reducible).
\end{definition}

With the notations above, $(\RR^q,h_0)$ is called the {\em flat part} of the LCP manifold, while $(N,h_N)$ is called the non-flat part. The distributions $T \RR^q$ and $T N$ descend to $D$-parallel distributions on $M$, respectively called the {\em flat distribution} and the {\em non-flat distribution} of the LCP manifold.

\begin{definition}
    Let $(M,c,D)$ be an LCP manifold. A Riemannian metric $g\in c$ is called {\em adapted} if the Lee form $\theta_g$ vanishes on the flat distribution (or equivalently, if the primitive $\varphi$ of the pull-back to the universal cover of $\theta_g$ is the pull-back of a function defined on the non-flat factor $N$.
\end{definition}

\begin{rem}\label{ad}
    Adapted metrics always exist (see \cite{FlaLCP,MP24}). The above considerations show that their lift to $\tilde M$ can be written $\tilde g=e^{-2\alpha}h$, where $\alpha\in C^\infty(N)$.
\end{rem}

\section{Conformal vector fields on LCP manifolds}
Let $(M, c, D)$ be an $n$-dimensional LCP manifold. Let $h$ be the Riemannian metric on its universal cover whose Levi-Civita connection is the lift of $D$, so that  $(\tilde M, h) = (\RR^q, h_0) \times (N, h_N)$, where $(\RR^q, h_0)$ is the flat part and $(N, h_N)$ is the non-flat part. We will denote the Levi-Civita connection of $h$ by $\nabla$.

Denoting by $p_1$, $p_2$ the projections from $\tilde M$ onto $\RR^q$ and $N$ respectively, the tangent bundle of $\RR^q\times N$ can be identified with the direct sum $\pi_1^* T\RR^q\oplus \pi_2^*TN$. Correspondingly, one can write every vector field $\xi\in\Gamma(T\tilde M)$ as a sum 
\begin{equation}\label{xi}
    \xi=\xi_1+\xi_2,
\end{equation} 
where $\xi_1(\cdot,y)\in\Gamma(T\RR^q)$ for every $y\in N$, and $\xi_2(x,\cdot)\in\Gamma(TN)$ for every $x\in \RR^q$.

Let $\bar\xi \in \Gamma(TM)$ be a conformal vector field, and let $\xi \in \Gamma(T \tilde M)$ be its lift to $\tilde M$. In particular $\gamma^* \xi = \xi$ for any $\gamma \in \pi_1(M)$. We decompose $\xi = \xi_1 + \xi_2$ as in \eqref{xi}. The fact that $\xi$ is a conformal vector field on $\tilde M$ is equivalent to the identity
\begin{align} \label{confeq}
h (\nabla_X \xi, Y) + h (X, \nabla_Y \xi) = f h(X, Y), && (\forall) X, Y \in T \tilde M,
\end{align}
where $f$ is a real-valued smooth function on $\tilde M$. Applying \eqref{confeq} with $X, Y \in T \RR^q$ shows that for any $y \in N$, $\xi_1 (\cdot, y)$ is a conformal vector field on $\RR^q$. In the same way, we show that for any $x \in \RR^q$, $\xi_2 (x, \cdot)$ is a conformal vector field on $N$.

Let $(e_1, \ldots, e_q)$ be the canonical basis of $\RR^q$ and denote by the same letters the induced vector fields on $\tilde M$ constant along $N$. We also fix an arbitrary vector field $Z$ on $N$, identified with the induced vector field on $\tilde M$ constant along $\RR^q$. Then $\nabla_{e_i}Z=\nabla_Ze_i=0$ for every $1\le i \le q$. 

Using \eqref{confeq} for $X=e_i$ and $Y=Z$ we obtain
\begin{equation}\label{nzx}
 h (e_i, \nabla_Z \xi_1) = -h (\nabla_{e_i} \xi_2, Z)=-\partial_{e_i}(h(\xi_2,Z)),
\end{equation}
which implies that for every fixed $y\in N$, the vector field $\nabla_Z \xi_1(\cdot,y)$ on $\RR^q$ is the gradient in $\RR^q$ of the function $-h_{(\cdot,y)}(\xi_2,Z)$.

Moreover, taking $X=e_i$, $Y=e_j$ in \eqref{confeq}, differentiating with respect to $Z$, and using the commutation of $\nabla_Z$ with $\nabla_{e_i}$ for $1\le i\le q$, shows that $\nabla_Z \xi_1 (\cdot, y)$ is a conformal vector field on $(\RR^q,h_0)$ for every $y\in N$.

Synthesizing the previous analysis, for any $y \in N$ and $Z \in T_y N$, $\nabla_Z \xi_1 (\cdot, y)$ is a gradient conformal vector field on $(\RR^q,h_0)$. These vector fields are well understood:

\begin{lemma} \label{EucConfGradField}
Let $X$ be a gradient conformal vector field on $(\RR^q,h_0)$. Then, there exist real numbers $b, b_1, \ldots, b_q$ such that $X = b \sum_{i=1}^qx_i e_i + \sum_{i=1}^qb_i e_i$, where $(e_1, \ldots, e_q)$ is the canonical basis of $\RR$ and $x_i$ is the $i$-th coordinate function.
\end{lemma}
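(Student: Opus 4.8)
The plan is to reformulate the two hypotheses on $X$ as pointwise algebraic conditions on its covariant derivative. Being a gradient field means $X=\nabla u$ for some $u\in C^\infty(\RR^q)$, so that the endomorphism $\nabla X$ coincides with the Hessian $\Hess\,u$ and is in particular symmetric. Being conformal means that the symmetrization of $\nabla X$ is a pointwise scalar multiple of the identity, say $\tfrac12 f\,\mathrm{Id}$ with $f\in C^\infty(\RR^q)$. Since $\nabla X$ is already symmetric, the two conditions collapse into the single equation
\begin{equation}\label{hesseq}
\Hess\,u=\lambda\,h_0,\qquad \lambda:=\tfrac12 f,
\end{equation}
which in the canonical coordinates reads $\partial_i\partial_j u=\lambda\,\delta_{ij}$ for all $1\le i,j\le q$. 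In particular the mixed second derivatives $\partial_i\partial_j u$ vanish for $i\neq j$, while all the pure second derivatives $\partial_i\partial_i u$ are equal to the common value $\lambda$.

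The core of the argument is then to prove that $\lambda$ is constant; this is exactly where the flatness of $(\RR^q,h_0)$ and the assumption $q\ge2$ are used. I would fix an index $k$ and, using $q\ge2$, choose $i\neq k$. Differentiating the relation $\partial_i\partial_i u=\lambda$ with respect to $x_k$ and commuting the (constant-coefficient) partial derivatives gives $\partial_k\lambda=\partial_i\partial_i(\partial_k u)=\partial_i(\partial_i\partial_k u)=0$, because $\partial_i\partial_k u=0$ for $i\neq k$. As $k$ was arbitrary, $\lambda$ is a constant, which I will call $b$.

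Once $\lambda\equiv b$, equation \eqref{hesseq} becomes $\partial_i\partial_j u=b\,\delta_{ij}$, which integrates to $u=\tfrac{b}{2}\sum_{i=1}^q x_i^2+\sum_{i=1}^q b_i x_i+\mathrm{const}$ for suitable real constants $b_1,\dots,b_q$. Taking the gradient yields $X=\nabla u=b\sum_{i=1}^q x_i e_i+\sum_{i=1}^q b_i e_i$, which is the desired conclusion.

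The only genuine obstacle is the constancy of $\lambda$: without it one would merely know that $\Hess\,u$ is a (possibly non-constant) multiple of the metric. The integrability computation above is the Schur-type step that removes this freedom, and it is precisely what fails in dimension $q=1$, where every function is a gradient conformal field and the statement is vacuously false; the argument therefore genuinely relies on $q\ge2$, which is the relevant range for the flat factor in the intended application.
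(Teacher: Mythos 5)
Your proof is correct and follows essentially the same route as the paper's: both reduce the hypotheses to $\partial_i\partial_j u=\lambda\,\delta_{ij}$, show that $\lambda$ is constant, and integrate. Your Schur-type third-derivative computation makes explicit the step (and the implicit use of $q\ge 2$) that the paper phrases as ``each $\partial_{e_i}\psi$ depends only on the $i$-th coordinate, hence $\chi$ is constant,'' followed by a small vector-space decomposition argument.
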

\begin{proof}
In this proof, $\nabla$ stands for the gradient in $\RR^q$ and the usual scalar product on $\RR^q$ is denoted by $\langle \cdot, \cdot \rangle$.

By assumption, there is a function $\psi : \RR^q \to \RR$ such that $X = \nabla \psi=\sum_{i=1}^q(\partial_{e_i} \psi) e_i$. Since $X$ is a conformal vector field one has
\begin{align*}
\langle \partial_{e_i} X, e_j \rangle + \langle e_i, \partial_{e_j} X \rangle = \chi \langle e_i, e_j \rangle, && (\forall) 1 \le i, j \le q,
\end{align*}
for some smooth function $\chi : \RR^q \to \RR$. This is equivalent to
\begin{align*}
\partial_{e_i} \partial_{e_j} \psi = \delta_i^j \chi, && (\forall) 1 \le i, j \le q.
\end{align*}
In particular, for any $1 \le i \le q$, $\partial_{e_i} \psi$ is a function depending only on the $i$-th coordinate, and thus $\chi$ has the same property. Consequently, $\chi$ is constant and $\psi$ belongs to the vector space
\begin{equation}
E := \lbrace \phi \in C^\infty(\RR^q) \ \vert \ \exists \mu \in \RR, \ \forall i,j, \ \partial_{e_i} \partial_{e_j} \phi  = \delta_i^j \mu \rbrace.
\end{equation}
Now, we remark that
\begin{equation}
E_0 := \lbrace \phi \in C^\infty(\RR^q) \ \vert \ \ \forall i,j, \ \partial_{e_i} \partial_{e_j} \phi  = 0 \rbrace
\end{equation}
is a hyperplane of $E$ since it is the kernel of the linear form $E \ni \phi \mapsto \partial_{e_1}^2 \phi$. Moreover, denoting by $x_i$ the $i$-th coordinate function, $\Span (\sum_{i=1}^q(x_i)^2)$ is a supplementary of $E_0$ in $E$, whence $$E  = \Span (\sum_{i=1}^q(x_i)^2) \oplus E_0.$$

Since $E_0 = \Span (x_1, \ldots, x_q)$, we obtain $\psi = \frac{b}{2} \sum_{i=1}^q(x_i)^2 +\sum_{i=1}^qb_i x_i$ for some $(b, b_1, \ldots, b_q) \in \RR^{q+1}$, and $X = \nabla \psi = b \sum_{i=1}^qx_i e_i + \sum_{i=1}^qb_i e_i$.

Conversely, any vector field of this form is a conformal gradient vector field.
\end{proof}

From Lemma~\ref{EucConfGradField} we conclude that there are functions $\tilde b, \tilde b_1, \ldots, \tilde b_q$ from $T N$ to $\RR$ such that
\begin{equation}  \label{nz}
\nabla_Z \xi_1 (x, y) = \tilde b (y, Z)\sum_{i=1}^q x_i e_i + \sum_{i=1}^q\tilde b_i (y, Z) e_i,
\end{equation}
for all $(x,y) \in \RR^q \times N$, and $Z \in T_y N$.
Applying $\eqref{nz}$ to $x=0$ and $x=e_1$ shows that
\begin{align}
\tilde b_i (y, Z) = h_{ (0, y)} (\nabla_Z \xi_1, e_i), && \tilde b (y, Z) = h_{(e_1, y)} (\nabla_Z \xi_1, e_1) - \tilde b_1 (y, Z)
\end{align}
for any $y \in  N$, $Z \in T_y N$ and $1 \le i \le q$, which implies that the functions $\tilde b, \tilde b_1, \ldots, \tilde b_q$ are smooth, and linear in the variable $Z$.

We fix $y_0 \in N$. Let $y \in N$ and $c : [0, 1] \to N$ be a  smooth path from $y_0$ to $y$, which exists by connectedness. One has, for any $x \in \RR^q$,
\begin{align*}
\xi_1 (x, y) - \xi_1 (x, y_0) &= \int_0^1 \nabla_{\dot c (t)} \xi_1 (x, c(t)) \dd t \\
&= \int_0^1 \tilde b(c(t), \dot c (t)) \sum_{i=1}^qx_i e_i + \sum_{i=1}^q\tilde b_i(c(t), \dot c (t)) e_i \dd t \\
&= \left( \int_0^1 \tilde b (c(t), \dot c (t)) \dd t \right) \sum_{i=1}^qx_i e_i + \sum_{i=1}^q\left( \int_0^1 \tilde b_i (c(t), \dot c (t)) \dd t \right) e_i.
\end{align*}
We define the smooth functions
\begin{align}
b (y) := \int_0^1 \tilde b (c(t), \dot c (t)) \dd t, && b_i (y) := \int_0^1 \tilde b_i (c(t), \dot c (t)) \dd t,
\end{align}
which by the above computation do not depend on the chosen path. Then, writing $\xi_1 (\cdot, y_0) =: \sum_{i=1}^q\beta_i e_i$ with $\beta_i \in C^\infty (\RR^q)$, we have
\begin{align}\label{xi1}
\xi_1 (x, y) = \sum_{i=1}^q(\beta_i (x) e_i + b (y) x_i e_i + b_i (y) e_i), && (\forall) (x, y) \in \RR^q \times N.
\end{align}
Using \eqref{nzx}, one obtains for every $Z\in \Gamma(TN)$ and $1\le i\le q$:
\[
\partial_{e_i}(h(\xi_2,Z))=-(Z(b)x_i+Z(b_i)),
\]
which implies that there exists some vector field $V \in \Gamma (T N)$ such that 
\begin{equation} \label{formXi2}
\xi_2 (x, y) = V(y) - \sum\limits_{i = 1}^q \left(\frac{(x_i)^2}{2}\,\nabla^N b(y)  + x_i\,\nabla^N b_i(y) \right)
\end{equation}
for all $(x,y)\in\RR^q\times N$, where $\nabla^N$ denotes the gradient defined by the Levi-Civita connection of $h_N$.

Let $g$ be an adapted metric on $N$ (Remark \ref{ad}) whose pull-back to $\tilde M$ can be written as $\tilde g = e^{-2 \alpha} h$ for some $\alpha \in C^\infty (N)$. Note that the function $\alpha$ is not bounded from above or from below. Indeed, there exists an element $\gamma\in\pi_1(M)$ which is a contraction with respect to $h$ (i.e. $\rho^*h=\lambda h $ with $\lambda\in (0,1)$), and since $\pi_1(M)$ acts isometrically with respect to $\tilde g$, we get $\gamma^*\alpha=\alpha+\frac12\ln\lambda$.

Since $M$ is compact, there exists a constant $C > 0$ such that
\[
\underset{\tilde M}{\sup} \Vert \xi \Vert_{\tilde g} = \underset{M}{\sup} \Vert \bar \xi \Vert_g \le C.
\]
Consequently,
\[
\Vert e^{-\alpha} \xi_2 \Vert^2_h = \Vert \xi_2 \Vert^2_{\tilde g} \le \Vert \xi \Vert^2_{\tilde g} \le C^2,
\]
whence for any $y \in N$, the estimate 
\begin{equation}\label{est}
    \Vert \xi_2 (x, y) \Vert_h \le e^{ \alpha (y)} C
\end{equation} 
(independent on $x$) holds. 
Now, Equation~\eqref{formXi2} shows that for every fixed $y_0\in N$, the square norm $\Vert \xi_2 (x,y_0)\Vert_h $ is polynomial in $(x_1, \ldots, x_q)$. Clearly this norm is bounded on $\RR^q$ if and only if $\nabla^N b(y_0) = \nabla^N b_i(y_0) = 0$ for every $1\le i\le q$ and for every $y_0\in N$, showing that the functions $b, b_1, \ldots, b_q$ are constant on $N$. Consequently, $\xi_2(x,y)=V(y)$ is induced on $\tilde M$ by a conformal vector field of $(N,g_N)$.

On the other hand, one also has $\Vert e^{-\alpha} \xi_1 \Vert^2_h = \Vert \xi_1 \Vert^2_{\tilde g} = \Vert \xi_1 \Vert^2_{\tilde g} \le \Vert \xi \Vert^2_{\tilde g} \le C^2$. Since $b, b_1, \ldots, b_q$ are constant, \eqref{xi1} shows that $\xi_1$ depends only on the variable $x$ of $\RR^q$. Therefore, for every $x\in\RR^q$ and $y\in N$ one has $\Vert \xi_1 (x) \Vert_{h} \le e^{- \alpha(y)} C$.
However, we have seen that the function $\alpha$ is unbounded from above. We conclude that $\Vert \xi_1 (x) \Vert_{h} = 0$, so finally $\xi_1 = 0$.

The conformal vector field $\xi=\xi_2=V$ is thus tangent to $N$ and constant in the direction of $\RR^q$. Taking non-zero vectors $X = Y \in T \RR^q$ in Equation~\eqref{nzx}, one gets
\[
f h(X,X) = 0,
\]
which in turn implies $f = 0$, so $\xi$ is a Killing vector field for $h$.

Altogether, we have proved the following result:

\begin{theorem} \label{ConfFieldsStruct}
Let $(M,c,D)$ be an LCP manifold. Then the lift of any conformal vector field of $(M, c)$ to the universal cover of $M$ endowed with its canonical Riemannian decomposition $(\RR^q,h_0) \times (N, h_N)$ is a Killing vector field of $(N, h_N)$.
\end{theorem}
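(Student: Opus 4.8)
The plan is to lift $\bar\xi$ to a conformal vector field $\xi$ on the Riemannian product $(\tilde M, h) = (\RR^q, h_0) \times (N, h_N)$, decompose it as $\xi = \xi_1 + \xi_2$ as in \eqref{xi}, and extract from the conformal equation \eqref{confeq} enough structure to determine each component. First I would test \eqref{confeq} against vectors adapted to the product splitting: with $X, Y \in T\RR^q$ each slice $\xi_1(\cdot, y)$ is conformal on Euclidean space, with $X, Y \in TN$ each slice $\xi_2(x, \cdot)$ is conformal on $N$, and the mixed choice $X = e_i$, $Y = Z \in T_y N$ produces \eqref{nzx}, exhibiting $\nabla_Z \xi_1(\cdot, y)$ as a gradient field on $\RR^q$; differentiating the flat--flat identity along $N$ shows it is also conformal. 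By the classification of gradient conformal fields on Euclidean space (Lemma \ref{EucConfGradField}) this field has the explicit affine--radial form, and integrating along paths in $N$ yields the global expressions \eqref{xi1} for $\xi_1$ and \eqref{formXi2} for $\xi_2$, the latter being a degree-two polynomial in the Euclidean coordinates whose higher-order coefficients are $\nabla^N b$ and $\nabla^N b_i$.

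The step I expect to be the main obstacle is replacing the Tashiro--Miyashita rigidity, which is unavailable here because the $N$-factor, and hence the product metric $h$, is incomplete. My substitute is an adapted metric $\tilde g = e^{-2\alpha} h$ with $\alpha \in C^\infty(N)$, together with the crucial fact that $\alpha$ is unbounded both from above and from below: since $D$ is non-exact, $\pi_1(M)$ contains an $h$-contraction acting by $\tilde g$-isometries, so $\gamma^* \alpha = \alpha + \frac12 \ln\lambda$ with $\lambda \in (0,1)$. Compactness of $M$ gives a uniform bound $\Vert \xi \Vert_{\tilde g} \le C$, which transfers to the fibrewise estimate \eqref{est}, namely $\Vert \xi_2(x,y) \Vert_h \le e^{\alpha(y)} C$ uniformly in $x \in \RR^q$.

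The conclusion then follows in two moves. Since $\Vert \xi_2(\cdot, y) \Vert_h^2$ is a genuine polynomial in $x$ by \eqref{formXi2} yet bounded on $\RR^q$ by \eqref{est}, all its non-constant coefficients must vanish, forcing $\nabla^N b = \nabla^N b_i = 0$; hence $b, b_1, \dots, b_q$ are constant on $N$ and $\xi_2(x,y) = V(y)$ is pulled back from $N$. With these constants fixed, \eqref{xi1} shows that $\xi_1$ depends on $x$ alone, so $\Vert \xi_1(x) \Vert_h \le e^{-\alpha(y)} C$ holds for \emph{every} $y \in N$; letting $\alpha(y) \to +\infty$ forces $\xi_1 \equiv 0$. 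Finally, with $\xi = \xi_2 = V$ tangent to $N$ and constant along $\RR^q$, evaluating \eqref{confeq} on a single nonzero $X \in T\RR^q$ gives $f\, h(X,X) = 0$, so the conformal factor $f$ vanishes and $\xi$ is Killing for $h$, hence for $h_N$, as claimed.
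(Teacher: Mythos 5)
Your proposal is correct and follows essentially the same route as the paper: the same decomposition $\xi=\xi_1+\xi_2$, the same use of the mixed conformal equation to identify $\nabla_Z\xi_1(\cdot,y)$ as a gradient conformal field on $\RR^q$, the same classification lemma and path integration leading to \eqref{xi1} and \eqref{formXi2}, and the same boundedness argument via the adapted metric and the unboundedness of $\alpha$ to kill the polynomial terms and $\xi_1$. Nothing to add.
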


The last step in our analysis of conformal vector fields on LCP manifolds is to link them to the Killing fields of the Gauduchon metric \cite{G} of the Weyl structure $(M,c,D)$. We first recall the definition of this particular metric:

\begin{definition}
If $(M,c)$ is a compact conformal manifold of dimension larger than $2$, then for any Weyl connection $D$ on $(M,c)$ there exists a unique (up to a positive multiplicative constant) Riemannian metric $g_G$ such that the Lee form of $D$ with respect to $g_G$ is coclosed. This metric is called the {\em Gauduchon metric} of the Weyl structure.
\end{definition}

We now return to our particular setting, with $\bar \xi \in TM$ a conformal vector field of $(M,c)$. By Theorem~\ref{ConfFieldsStruct}, the lift $\xi$ of $\bar \xi$ to $\Tilde M$ is a Killing vector field on $N$. In particular, it is a Killing vector field for $(\Tilde M,h)$, so it preserves the Levi-Civita connection $\nabla$ of $h$. But $\nabla$ is the lift of the Weyl connection $D$, and we deduce that $\bar \xi$ is an affine vector field for $D$, i.e. $\mathcal L_{\bar \xi} (D) = 0$.

Let $g_G$ be the Gauduchon metric of the Weyl structure $(M,c,D)$. We denote by $\theta_G$ the Lee form of $D$ with respect to $g_G$, by $\nabla^{g_G}$ the Levi-Civita connection of $g_G$, and by $(\varphi_t)_{t\in\RR}$ the flow of $\bar \xi$. Since $\bar \xi$ is affine, we obtain for any $t \in \RR$:
\begin{equation}
D (\varphi_t^* g_G) = \varphi_t^* (D g_G) = -2 \varphi_t^*(\theta_G \otimes g_G) = -2 (\varphi_t^* \theta_G) \otimes (\varphi_t^* g_G),
\end{equation}
and we get that $\varphi_t^* \theta_G$ is the Lee form of $D$ with respect to $\varphi_t^* g_G$. The Levi-Civita connection of $g_G$ is $\varphi_t^* \nabla^{g_G}$, hence $\delta^{\varphi_t^* g_G} = \varphi_t^* \delta^{g_G}$. We thus have:
\begin{equation}
\delta^{\varphi_t^* g_G} \theta_{\varphi_t^* g_G} = (\varphi_t^* \delta^{g_G}) (\varphi_t^* \theta_{g_G}) = \varphi_t^* (\delta^{g_G} \theta_{g_G}) = 0.
\end{equation}
Consequently, $\varphi_t^* g_G$ still is a Gauduchon metric of $(M,c,D)$, so  by the uniqueness property there exists a constant $\lambda_t > 0$ such that $(\varphi_t)^* g_G = \lambda_t g_G$. It follows that there exists $\lambda > 0$ such that
\begin{equation}\label{lg}
\mathcal L_{\bar \xi} g_G = \lambda g_G.
\end{equation}
Taking the trace in \eqref{lg} yields $2\delta \bar\xi = - n \lambda$. Integrating this equality over $M$ and using the divergence theorem, one obtains $\lambda = 0$, i.e.
\begin{equation}
\mathcal L_{\bar \xi} g_G = 0,
\end{equation}
thus showing that $\bar \xi$ is a Killing vector field for $g_G$.

Conversely, it is obvious that a Killing vector field for $g_G$ is a conformal vector field of $(M,c)$. This concludes the proof of Theorem~\ref{main}.

\renewcommand{\refname}{\bf References}

\end{document}